\definecolor{bluepigment}{rgb}{0.2, 0.2, 0.6}
\newtheorem{theorem}{Theorem}[section]
\theoremstyle{remark}
\numberwithin{equation}{section}
\newtheorem{lemma}[theorem]{Lemma}
\newtheorem{remark}{Remark}[section]
\newcommand{\N}{\mathbb N}
\newcommand{\R}{\mathbb R}
\renewcommand{\P}{\mathbb P}
\newcommand{\E}{\mathbb E}
\newcommand{\prob}{\xrightarrow{\,p\,}}
\newcommand{\distconv}{\xrightarrow{\,d\,}}
\newcommand{\disteq}{\overset{d}{=}}
\newcommand{\Ni}{{\cal N}}
\newcommand{\Yi}{{\cal Y}}
\begin{document}

	\title[Last Progeny Modified Time Inhomogeneous BRW]{Right-Most Position of a Last Progeny Modified Time Inhomogeneous Branching Random Walk}
	
	\author[Bandyopadhyay]{Antar Bandyopadhyay} 
	\address[Antar Bandyopadhyay]{Theoretical Statistics and Mathematics Unit \\
		Indian Statistical Institute, Delhi Centre \\ 
		7 S. J. S. Sansanwal Marg \\
		New Delhi 110016 \\
		INDIA}
	\email{antar@isid.ac.in}          
	\author[Ghosh]{Partha Pratim Ghosh}  
	\address[Partha Pratim Ghosh]{Theoretical Statistics and Mathematics Unit \\
		Indian Statistical Institute, Delhi Centre \\ 
		7 S. J. S. Sansanwal Marg \\
		New Delhi 110016 \\
		INDIA}         
	\email{p.pratim.10.93@gmail.com} 
	
	\date{\today}
	
		\begin{abstract}
		In this work, we consider a modification of time \emph{inhomogeneous} branching random walk, where the driving increment distribution  changes
		  over time macroscopically. 
          Following Bandyopadhyay and Ghosh \cite{BaGh21},		  
		  we give certain independent and identically distributed  (i.i.d.) displacements to all the particles at the last generation. We call this process \emph{last progeny modified time inhomogeneous branching random walk (LPMTI-BRW)}. Under very minimal assumptions on the underlying point processes of the displacements, we show that the maximum displacement converges to a limit after only an appropriate centering which
		  is either linear or linear with a logarithmic correction. 		  
		  Interestingly, the limiting distribution depends only on the first set of increments. We also derive Brunet-Derrida -type results of point process convergence of our LPMTI-BRW to a decorated Poisson point process. As in the case of the maximum, the limiting point process also depends only on the first set of increments. Our proofs are based on the method of coupling the maximum displacement with the smoothing transformation, which was introduced by Bandyopadhyay and Ghosh~\cite{BaGh21}.
	\end{abstract}
	
	\keywords{Branching random walk; time inhomogeneous environments; maximum operator; smoothing transformation; Decorated Poisson point process} 
	
	\subjclass[2010]{Primary: \#60F05; Secondary: \#60G50}
	
	\maketitle
	
	
	\setlength{\parindent}{0em}
	\setlength{\parskip}{1em}
	
\section{Introduction}
\label{Sec:Intro}
One dimensional \emph{Branching random walk (BRW)} was introduced by Hammersley \cite{Hamm74} in the early '70s and since then it has received 
significant attention from various researches. Some references on this classical
model with \emph{homogenious} displacements which are relevant to our work are \cite{King75, Bigg76, Bram78, BiKy97, BramZei07, HS09, BramZei09, AddRee09, Aide13, AiSh14, Mada2017}. Under reasonable assumptions, it is well known from these literature that in the homogeneous case the maximum displacement grows linearly, with a logarithmic correction, and is tight around its median.
The \emph{inhomogeneous} case has received much attention
in recent years \cite{BramZei09, Fa12, FaZei12}. 
Under certain uniform regularuty assumptions, 
Bramson and Zeitouni \cite{BramZei09} and Fang \cite{Fa12} showed that 
in the inhomogeneous case also 
the maximum displacement re-centered around its median is tight. Later
Fang and Zeitouni \cite{FaZei12} showed that in the binary branching with
independent Gaussian displacements the exact coefficients of the centering 
terms, both for the liner term and also for the logarithmic correction term 
differ based on the increasing/decreasing variance of  the time inhomogeneous 
displacements. This particular example is interesting and relevant for our work and 
is described in more details in Section ~\ref{Sec:Example}.

Bandyopadhyay and Ghosh \cite{BaGh21} introduced a new modification of the 
classical homogeneous BRW, where they added a set of i.i.d. displacements 
with a specific form at the last generation. This new process was 
termed as the \emph{last progeny modified BRW (LPM-BRW)} \cite{BaGh21}.
In this work, we consider an inhomogeneous version with the same modification. 
Our results complement and work of Fang and Zeitouni \cite{FaZei12} in the context 
of this new model of last progeny modified version of BRW. We shall show that under mild 
conditions the maximum displacement after appropriate centering, which 
can either be linear or linear with a logarithmic correction, has a weak limit, 
where the limiting distribution depends only on the point process of the first set of 
displacements. This result is unusual and can have some non-trivial statistical 
applications (see Remark ~\ref{Rem:Possible-Stat-Application}).

	\subsection{Model}

	We fix $k\in \N$. For each $i \in\{1,2,\ldots,k\}$, we  let $Z_i$ be a point process with $N_i:=Z_i(\R)<\infty$ a.s. and $q_i$ be a sequence of integers satisfying $\sum_{i=1}^k q_i(n)=n$, and we write $t_m=\sum_{i=1}^m q_i(n)$. A \emph{time inhomogeneous branching random walk (TI-BRW)} is a discrete-time stochastic process that can be described for each $n\geq 1$ as follows:
	
	At the $0$-th generation, we start with an initial particle at the origin. At time $t\in (t_{m-1},t_m] $, each of the particles at generation $(t-1)$ gives birth to a	random number of offspring distributed according to $N_m$. The offspring are then given random displacements independently and according to a copy of	the point process $Z_m$.
	
	For a particle $v$ in the $t$-th generation, we write $|v|=t$ and $S(v)$ denotes its position, which is the sum of all the displacements the	particle $v$ and its ancestors have received. We shall call the process $\{S(v):|v|=t,0\leq t\leq n, n\geq1\}$ a \emph{time inhomogeneous branching random walk (TI-BRW)}. We denote $R_n:=\max_{|v|=n}S(v)$ as the right-most position at the $n$-th generation.
	
Following our earlier work \cite{BaGh21}	, in this model we also 
introduce a non-negative real number $\theta>0$, which should be thought of as a scaling parameter for the additional displacement we give to each particle at the $n$-th generation. The additional displacements are of the form $\frac{1}{\theta}(-\log E_v)$, where $\{E_v\}_{|v|=n}$ are i.i.d. $\mbox{Exponential}\,(1)$ and are independent of the process $\{S(v):|v|\leq n\}$. We denote by $R_n^*\equiv R_n^*(\theta)$ the right-most position of this \emph{last progeny modified time inhomogeneous branching random walk (LPMTI-BRW)}.
	
	\subsection{Assumptions} 
	we first introduce the following important quantities. For each point process $Z_i=\sum_{j\geq1} \delta_{\xi_j^{(i)}}$ with $1\leq i\leq k$, we define
	\begin{equation}
		\nu_i(a):=\log\E\left[\int_{\R}e^{a x}\, Z_i(dx) \right]=\log \E\Bigg[\sum_{i=1}^{N_i} e^{a \xi_j^{(i)} }\Bigg],
	\end{equation}
for $a\in\R$, whenever the expectations exist. Needless to say that for each $i \in\{1,2,\ldots,k\}$, $\nu_i$ is the logarithm of the moment-generating function of the point process $Z_i$. 

Throughout this paper, for each $i \in\{1,2,\ldots,k\}$, we assume the followings:
\begin{itemize}
		
	\item[{\bf (A1)}]
	$\nu_i(a)$ is finite 
	for all  $a \in(-\vartheta,\infty)$ for some $\vartheta>0$.\\
	
	\item[{\bf (A2)}] 
	The point process $Z_i$ is \emph{non-trivial},  
	and the extinction probability of 
	the underlying branching process is $0$, i.e.,  
	$\P(N_i=1)<1$, $\P( Z_i(\{a\})=N_i )<1$ for any $a\in\R$  
	and $\P(N_i\geq1)=1 $.\\
	
	\item[{\bf (A3)}]
	$N_i$ has finite $(1+p)$-th moment for some $p>0$.
\end{itemize}

\subsection{Outline}
In Section ~\ref{Sec:Results}, 
we state our main results, which are proved in Section ~\ref{Sec:Proofs}.
In Section ~\ref{Sec:Example} we consider an important example and compare
our results with that of the existing literature. 

\section{Main Results}
\label{Sec:Results}
We first introduce some constants related to the point processes $Z_i$'s. For $i \in\{1,2,\ldots,k\}$,  we define
\begin{equation}
	\theta_{(i)}:=\inf\Big\{ a>0 : \frac{\nu_i(a)}{a}= \nu_i'(a) \Big\}.
\end{equation}
From our earlier work \cite{BaGh21},  we 
note that $\nu_i$'s are strictly convex under assumption {\bf (A1)} and {\bf (A2)},
thus, the above set is at most singleton. If it is a singleton, then  $\theta_{(i)}$ is the  unique point in $(0,\infty)$ such that a tangent from the origin to the graph of $\nu_i(a)$ touches the graph at $a=\theta_{(i)}$. And if it is empty, then by definition $\theta_{(i)}$ takes value $\infty$, and there does not exist any tangent from the origin to the graph of $\nu_i(a)$ on the right half-plane. 

\subsection{Asymptotic limits}
Our first result is a centered asymptotic limit of the right-most position, which is similar to the results in below-the-boundary case for \emph{last progeny modified BRW (LPM-BRW)} shown by Bandyopadhyay and Ghosh~\cite{BaGh21}. 
\begin{theorem}
	\label{Thm:Asymptotic-1}
	Suppose $q_i(n) \longrightarrow \infty$ for all $1 \leq i \leq k$, then for any 
	$ \theta < \min_i \theta_{(i)} \leq \infty$,  there exists a random variable  
	$H_{\theta,(1)}^{\infty}$
	 depending only on  $\theta$ and $Z_1$,  such that,
	\begin{equation}
		R_n^*(\theta) - \sum_{i=1}^k\frac{q_i(n)\nu_i\left(\theta\right)}{\theta}  
		\distconv H^{\infty}_{\theta,(1)}.
		\label{Equ:Asymptotic-1}
	\end{equation}
\end{theorem}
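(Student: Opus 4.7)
The plan is to pass to the normalised additive martingale $\tilde W_n(\theta):=W_n(\theta)/\E[W_n(\theta)]$, where $W_n(\theta):=\sum_{|v|=n}e^{\theta S(v)}$, following the exponential identification used in \cite{BaGh21}. Writing $X_v = e^{\theta S(v)}$, the classical identity $\max_v X_v/E_v \ed (\sum_v X_v)/E$ for i.i.d.\ $\text{Exp}(1)$ variables $E_v$ independent of $\{X_v\}$ yields
\[
R_n^*(\theta) \ed -\frac{\log E}{\theta} + \frac{\log W_n(\theta)}{\theta}, \qquad E \sim \text{Exp}(1).
\]
After subtracting the centering $\sum_{i=1}^k q_i(n)\nu_i(\theta)/\theta$, the theorem becomes equivalent to showing that $\tilde W_n(\theta)$ converges in distribution to a positive nondegenerate random variable depending only on $\theta$ and $Z_1$. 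Decomposing at $t_1=q_1(n)$ by the branching property,
\[
\tilde W_n(\theta) = \sum_{|u|=t_1} w_u\,\tilde W_n^{(u)}(\theta), \qquad w_u := e^{\theta S(u) - q_1(n)\nu_1(\theta)},
\]
where, conditional on $\Fi_{t_1}$, the $\tilde W_n^{(u)}(\theta)$ are i.i.d.\ mean-one normalised additive martingales of a TI-BRW with the remaining $k-1$ phases.

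The conditional mean of this decomposition equals $\sum_u w_u = \tilde W_{q_1(n)}^{(1)}(\theta)$, the additive martingale at level $\theta$ of the homogeneous first-phase BRW over $q_1(n)$ generations. Since $\theta<\theta_{(1)}$ is precisely Biggins' condition, assumptions (A1)--(A3) together with $q_1(n)\to\infty$ deliver, by Biggins' martingale convergence theorem, $\tilde W_{q_1(n)}^{(1)}(\theta) \to W_\infty^{(1)}(\theta)$ almost surely and in $L^1$, with $W_\infty^{(1)}(\theta)>0$ almost surely by (A2). The substantive step is then to show
\[
\sum_{|u|=t_1} w_u\bigl(\tilde W_n^{(u)}(\theta)-1\bigr) \cp 0.
\]
The natural route is a conditional second moment estimate: the left-hand side has conditional variance $\bigl(\sum_u w_u^2\bigr)\Var\bigl(\tilde W_n^{(u)}(\theta)\bigr)$, and
\[
\sum_{u} w_u^2 = e^{q_1(n)(\nu_1(2\theta)-2\nu_1(\theta))}\,\tilde W_{q_1(n)}^{(1)}(2\theta).
\]
By strict convexity of $\nu_1$ and the tangent-line characterisation of $\theta_{(1)}$, the slope-through-origin map $a\mapsto\nu_1(a)/a$ is decreasing on $(0,\theta_{(1)})$, so $\nu_1(2\theta)<2\nu_1(\theta)$ whenever $2\theta\le\theta_{(1)}$, forcing exponential decay of the prefactor. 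For the complementary range $\theta_{(1)}/2<\theta<\theta_{(1)}$ the second moment of $\tilde W_n^{(u)}$ need not be uniformly bounded, and one resorts to a truncation argument (the smoothing-transformation coupling from \cite{BaGh21}): truncate $\tilde W_n^{(u)}(\theta)$ at a large level $K$, apply the variance bound to the truncated piece, and use the uniform integrability of the phase-$i$ additive martingales (a consequence of $\theta<\theta_{(i)}$ together with (A3)) along with $q_i(n)\to\infty$ for $i\ge 2$ to control the remaining tail.

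Combining these pieces yields $\tilde W_n(\theta)\cgd W_\infty^{(1)}(\theta)$, and continuous mapping together with the independent auxiliary $E$ delivers
\[
R_n^*(\theta) - \sum_{i=1}^{k}\frac{q_i(n)\nu_i(\theta)}{\theta} \cgd -\frac{\log E}{\theta} + \frac{\log W_\infty^{(1)}(\theta)}{\theta} =: H_{\theta,(1)}^\infty,
\]
a function of $\theta$ and $Z_1$ only, as asserted. The principal obstacle lies in the middle step: the i.i.d.\ limits of the remaining-$k-1$-phase normalised martingales are genuinely random (mean one but not deterministic), so smearing them out against the random first-phase weights requires both sufficient spreading of the weights and UI of the subtree martingales --- in particular in the window $2\theta$ near or beyond $\theta_{(1)}$, where pure second-moment arguments fail and the smoothing-coupling technique of \cite{BaGh21} becomes indispensable.
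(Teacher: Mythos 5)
Your proposal is correct and follows essentially the same route as the paper: the same exponential coupling $\theta R_n^*(\theta)\disteq \log W_n(\theta)-\log E$, the same decomposition of $W_n(\theta)$ at generation $q_1(n)$, and the same reduction to showing that the weighted sum $\sum_u w_u(\tilde W_n^{(u)}(\theta)-1)$ vanishes in probability. The only cosmetic difference is that your second-moment-plus-truncation treatment of that step is carried out in the paper by citing the weighted $L^1$ weak law of Kurtz (via Biggins--Kyprianou), which needs only $\max_u w_u/\sum_u w_u\to 0$ and $L^1$ convergence of the subtree martingales (established there by induction on $k$), so no case split on $2\theta$ versus $\theta_{(1)}$ is required.
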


\begin{theorem}
\label{Thm:Asymptotic-1+1}
Suppose $q_i(n) \longrightarrow \infty$ for all $1 \leq i \leq k$ and 
$ \theta_{(1)}< \min_{i\neq 1} \theta_{(i)} \leq \infty$, then there exists a random variable  
$H_{\theta_{(1)},(1)}^{\infty}$
depending only on   $Z_1$,  such that,
\begin{equation}
	R_n^*\big(\theta_{(1)}\big) - \sum_{i=1}^k\frac{q_i(n)\nu_i\big(\theta_{(1)}\big)}{\theta_{(1)}}  +\frac{1}{2\theta_{(1)}}\log\left(q_1(n)\right)
	\distconv H^{\infty}_{\theta_{(1)},(1)}.
	\label{Equ:Asymptotic-1+1}
\end{equation}
\end{theorem}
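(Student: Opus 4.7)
The plan is to combine the Gumbel reduction of \cite{BaGh21} with a phase-wise decomposition of the additive partition function of the underlying TI-BRW, culminating in a Seneta--Heyde scaling for the phase-$1$ Biggins martingale at its critical parameter $\theta_{(1)}$. Throughout set $W_n(\theta) := \sum_{|v|=n} e^{\theta S(v)}$.

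First, integrating out the i.i.d.\ exponentials $\{E_v\}_{|v|=n}$ conditional on $\FF_n$ yields
\[
\P\big(R_n^*(\theta) \leq x \mid \FF_n\big) \;=\; \exp\!\big(-e^{-\theta x} W_n(\theta)\big),
\]
so $\theta R_n^*(\theta)$ has the same distribution as $\log W_n(\theta) + G$ for an independent standard Gumbel $G$. It therefore suffices to show that
\[
\log W_n(\theta_{(1)}) - \sum_{i=1}^k q_i(n)\,\nu_i(\theta_{(1)}) + \tfrac{1}{2}\log q_1(n)
\]
converges in distribution to a $\sigma(Z_1)$-measurable limit.

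Next, I would decompose $W_n(\theta_{(1)})$ at time $t_1$:
\[
W_n(\theta_{(1)}) \;=\; \sum_{v \in V_{t_1}} e^{\theta_{(1)} S(v)} \, W_n^{(v)}(\theta_{(1)}),
\]
where $V_{t_1}$ denotes the set of particles at generation $t_1$ and, conditional on $\FF_{t_1}$, the $W_n^{(v)}(\theta_{(1)})$ are i.i.d.\ copies of the partition function of a BRW running through phases $2,\ldots,k$ only. Since $\theta_{(1)} < \theta_{(i)}$ strictly for each $i \geq 2$, iterating the $L^1$-convergence of Biggins' additive martingale through these remaining phases (justified in the strictly subcritical regime under (A1)--(A3)) gives
\[
W_n^{(v)}(\theta_{(1)}) \Big/ \exp\!\Big( \sum_{i=2}^k q_i(n)\,\nu_i(\theta_{(1)}) \Big) \;\longrightarrow\; D_v \quad \text{in } L^1,
\]
with $(D_v)_{v \in V_{t_1}}$ i.i.d., strictly positive, of mean $1$, and independent of $\FF_{t_1}$. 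Hence to leading order $W_n(\theta_{(1)})$ equals $\exp\!\big(\sum_{i\geq 2} q_i(n)\,\nu_i(\theta_{(1)})\big)$ times a decorated Biggins martingale of the phase-$1$ BRW at its critical parameter.

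The crucial step is then the Seneta--Heyde scaling for this decorated martingale:
\[
\sqrt{q_1(n)} \cdot \sum_{v \in V_{t_1}} e^{\theta_{(1)} S(v)} D_v \Big/ e^{q_1(n)\, \nu_1(\theta_{(1)})} \;\cp\; c\, Z_\infty,
\]
for some $c>0$ and a strictly positive $Z_\infty$ measurable with respect to $\sigma(Z_1)$. The idea is to condition on $(D_v)$, apply the classical $1/\sqrt{n}$ convergence at criticality to the phase-$1$ BRW, and absorb the decorations into $c$ through $\E[D_v] = 1$. Combining with the Gumbel identity yields
\[
R_n^*(\theta_{(1)}) - \sum_{i=1}^k \frac{q_i(n)\,\nu_i(\theta_{(1)})}{\theta_{(1)}} + \frac{\log q_1(n)}{2\theta_{(1)}} \;\distconv\; \frac{1}{\theta_{(1)}}\big( \log(c\, Z_\infty) + G \big) \;=:\; H_{\theta_{(1)},(1)}^\infty,
\]
which depends only on $Z_1$ as required. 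The main obstacle will be this critical Seneta--Heyde step: one must verify that the leaf decorations $(D_v)$ inherited from the strictly supercritical phases $i \geq 2$ do not disrupt the $1/\sqrt{q_1(n)}$ scaling, and that (A1)--(A3) provide enough integrability to run the classical critical argument for an arbitrary point process $Z_1$. The remainder is bookkeeping of $o_{L^1}(1)$ errors from the supercritical convergences through phases $2,\ldots,k$.
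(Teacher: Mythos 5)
Your overall architecture is the same as the paper's: the exponential/Gumbel coupling $\theta R_n^*(\theta)\disteq \log W_n(\theta)-\log E$, a decomposition of $W_n(\theta_{(1)})$ at generation $q_1(n)$, $L^1$-convergence of the additive martingales of phases $2,\dots,k$ at the parameter $\theta_{(1)}$ (which is strictly below each of their critical points $\theta_{(i)}$), and finally the Seneta--Heyde scaling of A\"{i}d\'{e}kon and Shi for the critical phase-$1$ martingale, producing both the $\frac{1}{2\theta_{(1)}}\log q_1(n)$ correction and a limit measurable with respect to $Z_1$ alone.

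However, the step you correctly single out as ``the main obstacle'' is left genuinely open, and the mechanism you sketch for it does not work as stated. Conditioning on the decorations $(D_v)$ and then ``applying the classical $1/\sqrt{n}$ convergence'' is circular: once you condition on $(D_v)$, the object $\sum_{v}e^{\theta_{(1)}S(v)}D_v$ is no longer the additive martingale of the phase-$1$ BRW, so the classical critical result does not apply to it, and $\E[D_v]=1$ by itself does not let you replace $D_v$ by $1$ --- you need a law-of-large-numbers effect across the particles at generation $q_1(n)$. There is also a triangular-array issue you gloss over: for each $n$ the relevant decorations are $\overline{W}_{n,v}$, i.i.d.\ copies of $W_{n-q_1(n)}(\theta)(q_2(n),\dots,q_k(n),Z_2,\dots,Z_k)$ indexed by a set $V_{q_1(n)}$ that itself grows with $n$, so one cannot simply pass to the fixed limits $D_v$ first. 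The paper closes exactly this gap with its Lemma~\ref{Lem:Inhomo_Wn}: writing
\[
\frac{W_n(\theta)}{W_{q_1(n)}(\theta)}-1=\sum_{|v|=q_1(n)}\frac{e^{\theta S(v)}}{\sum_{|u|=q_1(n)}e^{\theta S(u)}}\left(\overline{W}_{n,v}(\theta)-1\right),
\]
noting that the maximal weight $M_n(\theta)=\max_{|v|=q_1(n)}e^{\theta S(v)}/\sum_{|u|=q_1(n)}e^{\theta S(u)}\prob 0$ even at $\theta=\theta_{(1)}$, and invoking Kurtz's lemma (Lemma~2.1 of Biggins--Kyprianou) together with the $L^1$-convergence \eqref{Equ:L1_conv} to conclude that the normalized ratio tends to $1$ in probability. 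This shows the decorations wash out entirely (they are not ``absorbed into $c$''; the constant is just the A\"{i}d\'{e}kon--Shi constant $\sqrt{2/(\pi\sigma_1^2)}$ for the undecorated phase-$1$ martingale), after which Theorem~1.1 of A\"{i}d\'{e}kon--Shi applied to $W_{q_1(n)}(\theta_{(1)})(q_1(n),Z_1)$ finishes the proof. Without an argument of this kind --- some quantitative statement that $\sum_v p_v\overline{W}_{n,v}\prob 1$ when $\max_v p_v\prob 0$ --- your crucial step remains unproved.
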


\begin{remark}
\label{Rem:Possible-Stat-Application}
It is very interesting to note that the centered asymptotic limit only depends on the 
point process of the first set of displacements. More interestingly, the result
is valid as long as $q_i(n) \longrightarrow \infty$ for all $1 \leq i \leq k$.
In particular, the rate of divergence of 
$q_1(n)$ can be very slow but we will still have the 
centered asymptotic limit depends only on the distribution of $Z_1$.
Thus our model LPMTI-BRW may be used as a 
very efficient 
``\emph{statistical sheave}''
to filter out the distribution of the first set of displacements 
(may be thought as the ``\emph{signal}") 
from a number of others which may 
be considered as ``\emph{noise}" and of much larger in numbers compared to that
of the ``\emph{signal}". 
We thus feel this result may have greater statistical significance. 
\end{remark}

As we will see in the proof of the above theorem (see Section 3), we have a slightly stronger result.  As in Theorem~2.5 of Bandyopadhyay and Ghosh~\cite{BaGh21}, we let 
\[
\hat{H}_{\theta,(1)}^{\infty} = 
\frac{1}{\theta}\log D_{\theta,(1)}^{\infty},
\]
where $D_{\theta,(1)}^{\infty}$ is the unique solution of the 
following \emph{linear recursive distributional equation} with mean $1$.
\begin{equation}
	\Delta\xlongequal{\text{ d }} \sum_{|v|=1}e^{\theta S(v)-\nu_1(\theta)} \Delta_v, 
	\label{Equ:RDE-for-D-theta-infty}
\end{equation}
where $\Delta_v$ are i.i.d. and has the same distribution as that of $\Delta$. 
As in Theorem~2.3 of Bandyopadhyay and Ghosh~\cite{BaGh21}, we also let 
\[
\hat{H}_{\theta_{(1)},(1)}^{\infty} = 
\frac{1}{\theta_{(1)}}\left[\log D_{\theta_{(1)},(1)}^{\infty}+ \frac{1}{2} \log \left( \frac{2}{\pi \sigma_1^2} \right)\right],
\]
where
\begin{equation}
		D_{\theta_{(1)},(1)}^{\infty} \stackrel{\mbox{a.s.}}{=\joinrel=\joinrel=}
	\lim_{n \rightarrow \infty}
	-
	\sum_{\left\vert v \right\vert = q_1(n)} \left(\theta_{(1)} S_v - 
	q_1(n) \nu \big(\theta_{(1)}\big) \right)
	e^{\theta_{(1)} S_v - 
		q_1(n) \nu (\theta_{(1)})},
	\label{Equ:Def-D-theta1-infty}
\end{equation}

\begin{equation}
	\sigma_1^2 := 
	\E\left[\sum_{\left\vert v \right\vert = 1} \left(\theta_{(1)} S_v - 
	 \nu \big(\theta_{(1)}\big) \right)^2
	e^{\theta_{(1)} S_v -  \nu (\theta_{(1)})}
	\right].
	\label{Equ:Def-sigma1}
\end{equation}

Then we have
	\begin{theorem}
	\label{Thm:Asymptotic-2}
	Suppose $q_i(n) \longrightarrow \infty$ for all $1 \leq i \leq k$, then for any 
	$ \theta < \min_i \theta_{(i)} \leq \infty$, 
	\begin{equation}
		R_n^* (\theta) -\sum_{i=1}^k\frac{q_i(n)\nu_i\left(\theta\right)}{\theta}     - \hat{H}^{\infty}_{\theta,(1)} 
		 \distconv
		 -\log E,
		\label{Equ:Asymptotic-2}
	\end{equation}
where $E \sim \mbox{Exponential}\,(1)$.
\end{theorem}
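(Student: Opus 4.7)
The plan is to convert the statement about $R_n^*(\theta)$ into a convergence statement for the inhomogeneous additive martingale, and then to prove that convergence by decomposing the BRW at the end of phase~$1$.

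First, conditional on the BRW $\sigma$-algebra $\Fi_n$, the family $\{E_v e^{-\theta S(v)}\}_{|v|=n}$ is independent with $E_v e^{-\theta S(v)}\sim\mathrm{Exp}(e^{\theta S(v)})$, so $\min_{|v|=n} E_v e^{-\theta S(v)} = E/W_n(\theta)$ for some $E\sim\mathrm{Exp}(1)$ independent of $\Fi_n$, where $W_n(\theta):=\sum_{|v|=n}e^{\theta S(v)}$. Thus
\[
R_n^*(\theta)=\frac{1}{\theta}\log W_n(\theta)-\frac{1}{\theta}\log E,
\]
and setting
\[
M_n:=W_n(\theta)\exp\!\Bigl(-\sum_{i=1}^k q_i(n)\nu_i(\theta)\Bigr),
\]
the left-hand side of \eqref{Equ:Asymptotic-2} rewrites as $\tfrac{1}{\theta}\log(M_n/D_{\theta,(1)}^{\infty})-\tfrac{1}{\theta}\log E$. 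Since $D_{\theta,(1)}^{\infty}>0$ almost surely and $E$ is independent of $\Fi_n$, Slutsky and the continuous mapping theorem reduce the proof to establishing
\[
M_n\;\prob\;D_{\theta,(1)}^{\infty}.
\]

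I would prove this convergence by decomposing at generation $q_1(n)$. Setting $w_u:=e^{\theta S(u)-q_1(n)\nu_1(\theta)}$ for $|u|=q_1(n)$, the branching structure gives
\[
M_n\;=\;\sum_{|u|=q_1(n)}w_u\,\Delta_u^{(n)},
\]
where, conditionally on $\Fi_{q_1(n)}$, the variables $\Delta_u^{(n)}$ are i.i.d.\ copies of the additive martingale of the $(k-1)$-phase inhomogeneous BRW driven by $Z_2,\dots,Z_k$ over $n-q_1(n)$ generations, each of mean $1$. Simultaneously $\sum_u w_u$ is the Biggins additive martingale for the pure $Z_1$-BRW at time $q_1(n)$. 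Since $\theta<\theta_{(1)}$, classical Biggins theory (using \textbf{(A1)}--\textbf{(A3)}) yields $\sum_u w_u\to D_{\theta,(1)}^{\infty}$ almost surely and $\max_u w_u\to 0$ almost surely. Splitting
\[
M_n-D_{\theta,(1)}^{\infty}\;=\;\sum_u w_u(\Delta_u^{(n)}-1)\;+\;\Bigl(\sum_u w_u-D_{\theta,(1)}^{\infty}\Bigr),
\]
the second summand vanishes a.s., so it suffices to show the first tends to $0$ in probability.

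Because $\theta<\min_i\theta_{(i)}$, we may pick $p'\in(0,p\wedge 1]$ with $(1+p')\theta<\min_{i\ge 2}\theta_{(i)}$; the standard $L^{1+p'}$-boundedness for Biggins additive martingales (together with \textbf{(A3)}), applied inductively over phases $2,\dots,k$, delivers $\sup_n\E|\Delta^{(n)}-1|^{1+p'}<\infty$. Conditionally on $\Fi_{q_1(n)}$ the von Bahr--Esseen inequality then gives
\[
\E\!\left[\Bigl|\sum_u w_u(\Delta_u^{(n)}-1)\Bigr|^{1+p'}\Bigm|\Fi_{q_1(n)}\right]\;\le\;2\,\E|\Delta^{(n)}-1|^{1+p'}\sum_u w_u^{1+p'},
\]
and $\sum_u w_u^{1+p'}\le(\max_u w_u)^{p'}\sum_u w_u\to 0$ a.s. Taking expectations and a routine truncation on $\{\sum_u w_u\le K\}$ (letting $K\to\infty$) yields the desired convergence, completing the proof. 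The main technical hurdle is the uniform bound $\sup_n\E|\Delta^{(n)}|^{1+p'}<\infty$: one must track how the $L^{1+p'}$-constants compose through the $k-1$ later phases whose lengths $q_2(n),\dots,q_k(n)$ all tend to infinity, and this is precisely where the full hypothesis $\theta<\min_i\theta_{(i)}$ (not merely $\theta<\theta_{(1)}$) is indispensable.
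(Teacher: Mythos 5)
Your proposal is correct, and its skeleton coincides with the paper's: the same exponential-minimum coupling $\theta R_n^*(\theta)\disteq \log W_n(\theta)-\log E$ reduces everything to showing that the normalized linear statistic converges in probability to $D_{\theta,(1)}^{\infty}$, and both you and the authors establish this by decomposing the tree at generation $q_1(n)$ into first-phase weights $w_u$ times i.i.d.\ mean-one martingales $\Delta_u^{(n)}$ of the remaining phases. Where you genuinely diverge is in how the centered weighted sum $\sum_u w_u(\Delta_u^{(n)}-1)$ is killed. You choose $p'>0$ with $(1+p')\theta<\min_{i\geq 2}\theta_{(i)}$ and run von Bahr--Esseen conditionally on $\mathcal{F}_{q_1(n)}$, which forces you to prove the uniform bound $\sup_n\E|\Delta^{(n)}-1|^{1+p'}<\infty$ by composing $L^{1+p'}$-estimates through all of phases $2,\dots,k$ --- a real (though standard) technical burden, and the place where {\bf (A3)} and the convexity argument showing $\nu_i((1+p')\theta)<(1+p')\nu_i(\theta)$ must be spelled out. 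The paper instead proceeds by induction on the number of phases $k$ and invokes Lemma~2.1 of Biggins and Kyprianou (a special case of Kurtz's weak law for weighted sums), which needs only that $M_n(\theta)=\max_u w_u/\sum_u w_u\to 0$ in probability and that $\Delta^{(n)}\to D^{\infty}_{\theta,(2)}$ in $L^1$; the $L^1$ convergence comes for free from the induction hypothesis via a Scheff\'e-type argument, since $\Delta^{(n)}$ is nonnegative with mean $1$ converging in probability to a mean-$1$ limit. So the paper's route is softer (no moments above the first are manipulated in this step), while yours is more quantitative and self-contained but must carry uniform higher-moment constants across all later phases. One small caveat: your inequality $\sum_u w_u^{1+p'}\le(\max_u w_u)^{p'}\sum_u w_u\to 0$ uses $\max_u w_u\to 0$ a.s., which rests on the law of large numbers for the right-most particle of the $Z_1$-BRW (i.e.\ $\theta R_{q_1(n)}-q_1(n)\nu_1(\theta)\to-\infty$ because $\nu_1(a)/a$ is strictly decreasing on $(0,\theta_{(1)})$); this is slightly more than "additive martingale theory" and should be stated, though it does hold under {\bf (A1)}--{\bf (A3)}.
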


\begin{theorem}
\label{Thm:Asymptotic-2+1}
Suppose $q_i(n) \longrightarrow \infty$ for all $1 \leq i \leq k$ and 
$ \theta_{(1)}< \min_{i\neq 1} \theta_{(i)} \leq \infty$, then
\begin{equation}
	R_n^*\big(\theta_{(1)}\big) - \sum_{i=1}^k\frac{q_i(n)\nu_i\big(\theta_{(1)}\big)}{\theta_{(1)}}  +\frac{1}{2\theta_{(1)}}\log\left(q_1(n)\right) - \hat{H}_{\theta_{(1)},(1)}^{\infty}
	\distconv -\log E,
	\label{Equ:Asymptotic-2+1}
\end{equation}
	where $E \sim \mbox{Exponential}\,(1)$.
\end{theorem}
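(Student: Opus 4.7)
The plan is to combine the exponential/Gumbel representation of the last-progeny-modified maximum with a careful asymptotic analysis of the partition function $W_n(\theta_{(1)}):=\sum_{|v|=n}e^{\theta_{(1)}S(v)}$ at the critical value of the first segment. The first step is the classical min-of-exponentials identity: since the additional displacements are $-\theta_{(1)}^{-1}\log E_v$ with $E_v$ iid $\mbox{Exp}(1)$ independent of the BRW,
\begin{equation*}
R_n^*\bigl(\theta_{(1)}\bigr)\disteq\frac{1}{\theta_{(1)}}\log W_n\bigl(\theta_{(1)}\bigr)-\frac{1}{\theta_{(1)}}\log E,
\end{equation*}
with $E\sim\mbox{Exp}(1)$ independent of the BRW. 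By Slutsky's theorem it therefore suffices to show the convergence in probability
\begin{equation*}
\frac{1}{\theta_{(1)}}\log W_n(\theta_{(1)})-\sum_{i=1}^{k}\frac{q_i(n)\nu_i(\theta_{(1)})}{\theta_{(1)}}+\frac{1}{2\theta_{(1)}}\log q_1(n)\prob\hat{H}^{\infty}_{\theta_{(1)},(1)}.
\end{equation*}

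I would then split the partition function at the end of the first segment:
\begin{equation*}
W_n(\theta_{(1)})=\sum_{|u|=q_1(n)}e^{\theta_{(1)}S(u)}W'_{n,u},
\end{equation*}
where, conditionally on the first segment, the $W'_{n,u}$'s are iid copies of the partition function of a TI-BRW over the remaining $k-1$ segments with laws $Z_2,\ldots,Z_k$, evaluated at $\theta_{(1)}$. Because $\theta_{(1)}<\min_{i\neq 1}\theta_{(i)}$ places $\theta_{(1)}$ strictly below the critical value of every later segment, Biggins' martingale convergence (applied segment by segment, with (A1)--(A3) supplying the $L\log L$ integrability) yields
\begin{equation*}
\tilde{W}'_{n,u}:=W'_{n,u}\exp\Bigl(-\sum_{i=2}^{k}q_i(n)\nu_i(\theta_{(1)})\Bigr)\prob D_u,\qquad\E[D_u]=1,
\end{equation*}
with the $D_u$'s iid across $u$. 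Writing $a_u:=e^{\theta_{(1)}S(u)-q_1(n)\nu_1(\theta_{(1)})}$ and $M_{q_1(n)}:=\sum_u a_u$, the identity rewrites as
\begin{equation*}
\log W_n(\theta_{(1)})-\sum_{i=1}^{k}q_i(n)\nu_i(\theta_{(1)})=\log\Bigl(\sum_u a_u \tilde{W}'_{n,u}\Bigr).
\end{equation*}

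For the critical segment alone, the Aid\'ekon--Shi-type asymptotic used in Theorem~2.3 of \cite{BaGh21} gives
\begin{equation*}
\sqrt{q_1(n)}\,M_{q_1(n)}\prob\sqrt{\frac{2}{\pi\sigma_1^2}}\,D^{\infty}_{\theta_{(1)},(1)},
\end{equation*}
hence $\log M_{q_1(n)}+\tfrac{1}{2}\log q_1(n)\prob\theta_{(1)}\hat{H}^{\infty}_{\theta_{(1)},(1)}$. What remains is the noise-cancellation bound
\begin{equation*}
\frac{1}{M_{q_1(n)}}\sum_u a_u\bigl(\tilde{W}'_{n,u}-1\bigr)\prob 0,
\end{equation*}
which is where the coupling-with-smoothing-transformation method of \cite{BaGh21} enters: conditionally on $\{S(u):|u|=q_1(n)\}$, the summands are independent and centred, and after truncating the $\tilde W'_{n,u}$'s at a level $K$ (permissible because they converge in $L^{1}$) a conditional Chebyshev bound reduces the claim to controlling $\sum_u a_u^2/M_{q_1(n)}^2$.

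The main obstacle is precisely this last step: at criticality the weights $a_u/M_{q_1(n)}$ concentrate on a small random cluster of leader particles whose paths stay close to the linear boundary $s\mapsto s\nu_1'(\theta_{(1)})$, so the naive weak law for weighted sums of iid mean-one variables is borderline and the ratio $\sum_u a_u^2/M_{q_1(n)}^2$ must be estimated using the spine/size-biasing machinery that already underlies the Aid\'ekon--Shi asymptotic, combined with careful bookkeeping of how the $k-1$ later segments contribute to $\tilde W'_{n,u}$. Once this uniform concentration is established, combining it with the Gumbel representation via Slutsky's theorem produces the claimed distributional convergence to $-\log E$.
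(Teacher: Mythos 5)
Your overall architecture is the same as the paper's: the Gumbel identity $\theta_{(1)}R_n^*(\theta_{(1)})\disteq \log W_n(\theta_{(1)})-\log E$, the decomposition of $W_n(\theta_{(1)})$ at generation $q_1(n)$, the A\"id\'ekon--Shi asymptotic $\sqrt{q_1(n)}\,W_{q_1(n)}(\theta_{(1)})e^{-q_1(n)\nu_1(\theta_{(1)})}\prob\sqrt{2/(\pi\sigma_1^2)}\,D^{\infty}_{\theta_{(1)},(1)}$ for the critical first block, and the reduction of everything else to the concentration statement $M_{q_1(n)}^{-1}\sum_u a_u\bigl(\tilde W'_{n,u}-1\bigr)\prob 0$. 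This is exactly the content of Lemma~\ref{Lem:Inhomo_Wn} (proved in the paper by induction on $k$) combined with the coupling identities \eqref{Equ:coupling-1}--\eqref{Equ:coupling-2}.

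The difference is that you stop at the decisive step and declare it ``the main obstacle,'' suggesting it requires spine/size-biasing machinery. It does not, and your own reduction essentially already closes it. After truncating at level $K$ (legitimate, since the $\tilde W'_{n,u}$ converge in $L^1$ and are therefore uniformly integrable), the conditional variance of the truncated sum is at most $K^2\sum_u a_u^2/M_{q_1(n)}^2\le K^2\max_u\bigl(a_u/M_{q_1(n)}\bigr)$, and the maximal normalized Gibbs weight $\max_{|u|=q_1(n)}e^{\theta_{(1)} S(u)}/\sum_{|w|=q_1(n)}e^{\theta_{(1)} S(w)}$ tends to $0$ in probability \emph{even at criticality}: up to constants and the derivative-martingale limit, the numerator is of order $q_1(n)^{-3/2}e^{q_1(n)\nu_1(\theta_{(1)})}$ while the denominator is of order $q_1(n)^{-1/2}e^{q_1(n)\nu_1(\theta_{(1)})}$, so the ratio is $O_P\bigl(q_1(n)^{-1}\bigr)$; this is precisely what the paper cites from (5.5)--(5.6) of \cite{BaGh21}. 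The paper then packages the truncation-plus-Chebyshev argument as Lemma~2.1 of Biggins and Kyprianou \cite{BiKy97} (a special case of Kurtz's lemma), whose only hypotheses are the vanishing of the maximal weight and the $L^1$ convergence of the summands. Your worry that the weak law is ``borderline'' at criticality concerns the absolute size of the weights $a_u$, not the ratio $\max_u a_u/\sum_u a_u$, and it is only the latter that governs the weighted law of large numbers. So the proof is recoverable from the route you sketch, but as written the key estimate is asserted to be difficult rather than carried out.
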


\begin{remark}
	Note that  
	$H_{\theta, (1)}^{\infty}$ 
	in Theorem~\ref{Thm:Asymptotic-1} has the same distribution as
	$\hat{H}_{\theta,(1)}^{\infty} - \log E$, where 
	$E \sim \mbox{Exponential}\,(1)$ and is independent of
	$\hat{H}_{\theta,(1)}^{\infty}$.
	Similarly, 
	$H_{\theta_{(1)}, (1)}^{\infty}$ 
	in Theorem~\ref{Thm:Asymptotic-1+1} has the same distribution as
	$\hat{H}_{\theta_{(1)},(1)}^{\infty} - \log E$, where 
	$E \sim \mbox{Exponential}\,(1)$ and is independent of
	$\hat{H}_{\theta_{(1)},(1)}^{\infty}$.
\end{remark}

	 As a corollary of the above results, we obtain that if the centering term converges after dividing by $n$, then $R_n^*/n$ has a limit in probability. In particular, we have the following result:
\begin{theorem}
	\label{Thm:Conv_in_probability}
	If for all $1 \leq i \leq k$, $q_i(n) \longrightarrow \infty$ satisfying $\lim_{n \rightarrow \infty} q_i(n)/n=\alpha_i\geq 0$, then
for any $ \theta < \min_i \theta_{(i)} \leq \infty$ and also for $\theta= \theta_{(1)}< \min_{i\neq 1} \theta_{(i)} \leq \infty$,
\begin{equation}
	\frac{R_n^*(\theta)}{n}\prob \sum_{i=1}^k\frac{\alpha_i\nu_i\left(\theta\right)}{\theta}. \label{Equ:Conv_in_probability}
\end{equation}
\end{theorem}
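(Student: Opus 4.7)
The plan is to deduce this statement as an almost immediate consequence of Theorems~\ref{Thm:Asymptotic-1} and~\ref{Thm:Asymptotic-1+1}. The key general fact I will use is that if a sequence of random variables converges in distribution to a proper (finite-valued) random variable, then the sequence is tight, hence $O_p(1)$, and therefore becomes $o_p(1)$ after dividing by $n$.

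First I would handle the sub-critical case $\theta < \min_i \theta_{(i)}$. By Theorem~\ref{Thm:Asymptotic-1},
\begin{equation*}
R_n^*(\theta) - \sum_{i=1}^k \frac{q_i(n)\nu_i(\theta)}{\theta} \distconv H^{\infty}_{\theta,(1)},
\end{equation*}
and since $H^{\infty}_{\theta,(1)}$ is a finite random variable, the left-hand side is tight, and dividing by $n$ gives
\begin{equation*}
\frac{R_n^*(\theta)}{n} - \sum_{i=1}^k \frac{q_i(n)}{n}\cdot\frac{\nu_i(\theta)}{\theta} \prob 0.
\end{equation*}
Combining this with the hypothesis $q_i(n)/n \to \alpha_i$ and applying Slutsky's theorem to the deterministic sequence $\sum_{i=1}^k (q_i(n)/n)\nu_i(\theta)/\theta$, which converges to $\sum_{i=1}^k \alpha_i \nu_i(\theta)/\theta$, yields \eqref{Equ:Conv_in_probability}.

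For the boundary case $\theta = \theta_{(1)} < \min_{i \neq 1} \theta_{(i)}$, I would invoke Theorem~\ref{Thm:Asymptotic-1+1} in the same way. The extra centering term is $\frac{1}{2\theta_{(1)}}\log q_1(n)$, and after division by $n$ this tends to $0$ since $q_1(n) \leq n$ forces $\log q_1(n) = o(n)$. The tight random part divided by $n$ is again $o_p(1)$, and the identification of the deterministic limit is identical to the previous case. There is no real obstacle here; the entire argument is a routine combination of weak convergence to a tight limit, elementary growth estimates on $\log q_1(n)/n$, and Slutsky's theorem applied to sequences of constants.
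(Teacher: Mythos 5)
Your proposal is correct and matches the paper's intent exactly: the paper states this theorem as an immediate corollary of Theorems~\ref{Thm:Asymptotic-1} and~\ref{Thm:Asymptotic-1+1}, obtained precisely by noting that a weakly convergent (hence tight) sequence becomes $o_p(1)$ after division by $n$ and that the deterministic centering terms converge to the claimed limit. No further commentary is needed.
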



\subsection{Brunet-Derrida type results}
Here we present results of the type Brunet and Derrida~\cite{BrDerr2011} for our LPMTI-BRW.

For any $ \theta < \min_i \theta_{(i)} \leq \infty$, we define
\begin{equation}
	Z_n(\theta)=\sum_{|v|=n}\delta_{\left\{\theta S(v)-\log E_v-\sum_{i=1}^k q_i(n)\nu_i\left(\theta\right)  -\theta\hat{H}_{\theta,(1)}^{\infty}\right\}}, 
\end{equation}
and for $ \theta_{(1)}< \min_{i\neq 1} \theta_{(i)} \leq \infty$, we define
\begin{equation}
	Z_n\big(\theta_{(1)}\big)=\sum_{|v|=n}\delta_{\left\{\theta_{(1)} S_v-\log E_v-\sum_{i=1}^k q_i(n)\nu_i(\theta_{(1)})+\frac{1}{2}\log \left(q_1(n)\right)-\theta_{(1)}\hat{H}_{\theta_{(1)},(1)}^{\infty}\right\}}, 
\end{equation}
where $\hat{H}_{\theta,(1)}^{\infty}$ and $\hat{H}_{\theta_{(1)},(1)}^{\infty}$ are as  in  Theorems~\ref{Thm:Asymptotic-2} and~\ref{Thm:Asymptotic-2+1}.  
Our first result is the weak convergence of  the point processes $\left(Z_n\left(\theta\right)\right)_{n \geq 0}$, which is similar to the results for LPM-BRW as shown by Bandyopadhyay and Ghosh~\cite{BaGh21}. 
\begin{theorem}
	\label{Thm:Point-Process-Conv}
	Suppose $q_i(n) \longrightarrow \infty$ for all $1 \leq i \leq k$, then
	for any $ \theta < \min_i \theta_{(i)} \leq \infty$ and also for 
	$\theta= \theta_{(1)}< \min_{i\neq 1} \theta_{(i)} \leq \infty$,
	\[Z_n(\theta)\distconv \Yi, \]
	where $\Yi$ is a decorated Poisson point process. In particular,
	$\Yi=\sum_{j\geq1}\delta_{-\log \zeta_j},$
	where $\Ni=\sum_{j\geq1}\delta_{\zeta_j}$ is a homogeneous Poisson point process on $\R_+$ with intensity $1$.
\end{theorem}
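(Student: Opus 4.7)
The plan is to establish weak convergence $Z_n(\theta)\distconv \Yi$ via convergence of Laplace functionals. First, I would rewrite each atom of $Z_n(\theta)$ as $-\log(E_v e^{-a_v})$, where
\[
a_v := \theta S(v) - \sum_{i=1}^k q_i(n)\nu_i(\theta) - \theta\, \hat{H}^\infty_{\theta,(1)}
\]
in the subcritical case $\theta < \min_i \theta_{(i)}$ (in the critical case $\theta = \theta_{(1)}$ the extra $\tfrac{1}{2}\log q_1(n)$ term is absorbed into $a_v$). Setting $\tilde E_v := E_v e^{-a_v}$ and $\tilde Z_n := \sum_{|v|=n} \delta_{\tilde E_v}$, the point process $\tilde Z_n$ is the pushforward of $Z_n(\theta)$ under the homeomorphism $y \mapsto e^{-y}$ from $\R$ onto $(0,\infty)$, so the continuous-mapping theorem for the vague topology reduces the claim to $\tilde Z_n \distconv \Ni$, the rate-one Poisson point process on $(0,\infty)$.

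Next I would compute the conditional Laplace functional of $\tilde Z_n$ given the BRW $\sigma$-algebra $\Fi_n$. Conditionally the $\tilde E_v$ are independent $\mathrm{Exp}(e^{a_v})$ variables, so for any nonnegative, bounded continuous $g$ of compact support on $(0,\infty)$,
\[
E\bigl[e^{-\tilde Z_n(g)} \bigm| \Fi_n\bigr] \;=\; \prod_{|v|=n} \phi_g(a_v), \qquad \phi_g(a) := \int_0^\infty e^{-g(s)}\, e^{a}\, e^{-s e^{a}}\,ds.
\]
Using $\int_0^\infty e^a e^{-s e^a}\,ds = 1$ and $|1-e^{-s e^a}| \leq s e^a$, a one-line expansion gives $1 - \phi_g(a) = e^a c(g) + O(e^{2a})$ as $a \to -\infty$, with $c(g) := \int_0^\infty(1-e^{-g(s)})\,ds$, hence $\log \phi_g(a) = -c(g)\, e^a + O(e^{2a})$ uniformly for $a \leq a_0$.

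The two asymptotic inputs needed are (a) $\sum_{|v|=n} e^{a_v} \prob 1$ and (b) $\max_{|v|=n} e^{a_v} \prob 0$. For (a) in the subcritical regime, observe $\sum_v e^{a_v} = W_n(\theta)/D^\infty_{\theta,(1)}$ with $W_n(\theta) := \sum_{|v|=n} \exp\bigl(\theta S(v) - \sum_i q_i(n)\nu_i(\theta)\bigr)$, and decompose along generation $q_1(n)$:
\[
W_n(\theta) = \sum_{|u|=q_1(n)} e^{\theta S(u) - q_1(n)\nu_1(\theta)}\, W_n^{(u)},
\]
where, conditionally on $\Fi_{q_1(n)}$, the $W_n^{(u)}$ are i.i.d.\ unit-mean copies of the additive martingale of the residual TI-BRW driven by $Z_2,\dots,Z_k$. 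The $L^{1+p}$ bounds already used in proving Theorems~\ref{Thm:Asymptotic-1}--\ref{Thm:Asymptotic-2+1} force $W_n^{(u)} \to 1$ in $L^{1+p}$, and a routine second-moment argument then gives $W_n(\theta) - W^{(1)}_{q_1(n)}(\theta) \prob 0$; combining with the a.s.\ additive-martingale convergence $W^{(1)}_{q_1(n)}(\theta) \to D^\infty_{\theta,(1)}$ yields (a). In the critical case the same decomposition applies with the extra factor $\sqrt{q_1(n)}$, and one invokes the standard critical rescaling $\sqrt{q_1(n)}\,W^{(1)}_{q_1(n)}(\theta_{(1)}) \to \sqrt{2/(\pi \sigma_1^2)}\,D^\infty_{\theta_{(1)},(1)}$ a.s.\ together with the definition of $\hat H^\infty_{\theta_{(1)},(1)}$ to again obtain $\sum_v e^{a_v} \to 1$. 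Property (b) is immediate from (a) and the exponential spread of the population against bounded total mass.

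Putting it together, $\sum_v \log\phi_g(a_v) = -c(g)\sum_v e^{a_v} + O\bigl(\max_v e^{a_v}\sum_v e^{a_v}\bigr) \prob -c(g)$, whence $E[e^{-\tilde Z_n(g)}\mid\Fi_n] \prob e^{-c(g)}$ and bounded convergence yields $E[e^{-\tilde Z_n(g)}] \to e^{-c(g)} = \exp\bigl(-\int_0^\infty(1-e^{-g(s)})\,ds\bigr)$, the Laplace functional of a rate-one PPP on $(0,\infty)$. The main technical obstacle is step (a) -- the assertion that the global TI-additive martingale is asymptotically captured by the first block's limit $D^\infty_{\theta,(1)}$. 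This is precisely where the structural assumption on $\theta$ relative to the $\theta_{(i)}$'s does its work; the remainder is a standard PPP Laplace-functional calculation.
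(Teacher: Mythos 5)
Your overall architecture is the same as the paper's: the paper proves this theorem by invoking the Laplace-functional argument behind Theorems~2.7 and~2.8 of Bandyopadhyay and Ghosh~\cite{BaGh21} together with Lemma~\ref{Lem:Inhomo_Wn}, and your conditional-Laplace-functional computation resting on the two inputs (a) $\sum_{|v|=n}e^{a_v}\prob 1$ and (b) $\max_{|v|=n}e^{a_v}\prob 0$ is exactly that argument written out. Your input (a) is precisely the content of Lemma~\ref{Lem:Inhomo_Wn} combined with Proposition~4.2(ii) of \cite{BaGh21} (resp.\ Theorem~1.1 of \cite{AiSh14} in the critical case), and your decomposition of $W_n(\theta)$ at generation $q_1(n)$ is the same decomposition the paper uses to prove that lemma.

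However, your justification of (a) has a genuine gap. First, the assertion that $W_n^{(u)}\to 1$ in $L^{1+p}$ is false: conditionally on $\Fi_{q_1(n)}$ each $W_n^{(u)}$ is a unit-mean additive martingale of the residual TI-BRW and converges to the \emph{nondegenerate} limit $D^{\infty}_{\theta,(2)}$, not to the constant $1$; what makes the weighted average $\sum_u w_u W_n^{(u)}\big/\sum_u w_u$ tend to $1$ is a law-of-large-numbers effect across many terms with vanishing maximal weight, not convergence of the individual summands. Second, the ``routine second-moment argument'' is not available under the standing assumptions: {\bf (A1)}--{\bf (A3)} only give a finite $(1+p)$-th moment for some $p>0$, so $W_n^{(u)}$ may have infinite variance and the conditional variance of $\sum_u w_u\big(W_n^{(u)}-1\big)$ need not exist. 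This is exactly why the paper's proof of Lemma~\ref{Lem:Inhomo_Wn} uses the truncated-second-moment inequality of Kurtz (via Lemma~2.1 of Biggins--Kyprianou) instead of Chebyshev; you need that substitute (or a von Bahr--Esseen bound in $L^{1+p}$). Third, (b) is not ``immediate from (a)'': a tight sum does not force a vanishing maximum (a single atom of mass $1$ satisfies (a) but not (b)); one needs the separate estimate $\theta R_n-\sum_i q_i(n)\nu_i(\theta)\prob-\infty$, which in the subcritical case follows from the strict decrease of $a\mapsto\nu_i(a)/a$ on $(0,\theta_{(i)})$ and in the critical case from the logarithmic correction; the paper points to (5.5)--(5.6) of \cite{BaGh21} for the analogous claim. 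With these three repairs your proof closes and coincides with the paper's intended argument.
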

The following is a slightly weaker version of the above theorem.
\begin{theorem}
	\label{Thm:Point-Process-Weak-Conv}
	Suppose $q_i(n) \longrightarrow \infty$ for all $1 \leq i \leq k$, then
	for any $ \theta < \min_i \theta_{(i)} \leq \infty$,
	\begin{equation}
	\sum_{|v|=n}\delta_{\left\{\theta S(v)-\log E_v-\sum_{i=1}^k q_i(n)\nu_i\left(\theta\right)  \right\}}\distconv \sum_{j\geq1}\delta_{-\log \zeta_j+\theta\hat{H}_{\theta,(1)}^{\infty} },
\end{equation}
	and for $ \theta_{(1)}< \min_{i\neq 1} \theta_{(i)} \leq \infty$, 
	\begin{equation}
	\sum_{|v|=n}\delta_{\left\{\theta_{(1)} S_v-\log E_v-\sum_{i=1}^k q_i(n)\nu_i(\theta_{(1)})+\frac{1}{2}\log \left(q_1(n)\right)\right\}}
	\distconv \sum_{j\geq1}\delta_{-\log \zeta_j+\theta_{(1)}\hat{H}_{\theta_{(1)},(1)}^{\infty} }, 
	\end{equation}
	where $\Ni=\sum_{j\geq1}\delta_{\zeta_j}$ is a homogeneous Poisson point process on $\R_+$ with intensity $1$, which is independent of the process $\{S(v):|v|\leq n\}$.
\end{theorem}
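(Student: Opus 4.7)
The plan is to deduce Theorem~\ref{Thm:Point-Process-Weak-Conv} from Theorem~\ref{Thm:Point-Process-Conv} via an elementary shift argument combined with the continuous mapping theorem. Let $\tilde Z_n(\theta)$ denote the point process on the left-hand side of the first assertion. A direct comparison with the definition of $Z_n(\theta)$ in Theorem~\ref{Thm:Point-Process-Conv} shows that $\tilde Z_n(\theta)$ is precisely the translate of $Z_n(\theta)$ by the random amount $+\theta\,\hat H_{\theta,(1)}^\infty$; an analogous identity holds in the critical regime $\theta=\theta_{(1)}$, where the $\frac{1}{2\theta_{(1)}}\log q_1(n)$ correction is folded into the centering.

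By Theorem~\ref{Thm:Point-Process-Conv}, $Z_n(\theta)\distconv \Yi = \sum_{j\ge 1}\delta_{-\log \zeta_j}$ with $\{\zeta_j\}$ a PPP of intensity $1$ on $\R_+$. The translation map $(\mu,h)\mapsto \mu\circ(x\mapsto x-\theta h)$ is continuous on the product of the space of Radon point measures (with the vague topology) and $\R$, so to finish via the continuous mapping theorem, the plan is to obtain the joint convergence
\[
\bigl(Z_n(\theta),\,\hat H_{\theta,(1)}^\infty\bigr)\distconv \bigl(\Yi,\,\hat H_{\theta,(1)}^\infty\bigr),
\]
with $\Yi$ independent of $\hat H_{\theta,(1)}^\infty$. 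This amounts to the stable convergence $\E[\phi(Z_n(\theta))\mid \sigma(\text{first batch})]\to \E[\phi(\Yi)]$ in probability for every bounded continuous functional $\phi$. The rationale is that the proof of Theorem~\ref{Thm:Point-Process-Conv} in Section~\ref{Sec:Proofs} proceeds by coupling with the smoothing transformation on the first batch, extracting exactly $\theta\,\hat H_{\theta,(1)}^\infty = \log D_{\theta,(1)}^\infty$ as the shift; once this is subtracted, the residual point process is driven entirely by the terminal-generation decorations $\{E_v\}$ and the later batches, all independent of the first batch.

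Granted the joint convergence, the continuous mapping theorem yields
\[
\tilde Z_n(\theta)\distconv \sum_{j\ge 1}\delta_{-\log\zeta_j+\theta\,\hat H_{\theta,(1)}^\infty},
\]
with $\{\zeta_j\}$ independent of $\hat H_{\theta,(1)}^\infty$, which is the first assertion. The second assertion follows verbatim using the $\theta_{(1)}$-regime of Theorem~\ref{Thm:Point-Process-Conv}.

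The main obstacle is the stable-convergence step: it is not implied by the mere distributional convergence of Theorem~\ref{Thm:Point-Process-Conv}, and making it rigorous will require revisiting that proof to verify that the conditional Laplace functional of $Z_n(\theta)$ given the first batch converges in probability to the (deterministic) Laplace functional of $\Yi$. This should fall out of the coupling-with-smoothing-transformation argument quite transparently, since that argument is already built on conditioning on the first batch, but it does require an explicit check.
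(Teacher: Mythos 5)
Your reduction of Theorem~\ref{Thm:Point-Process-Weak-Conv} to Theorem~\ref{Thm:Point-Process-Conv} does not close as written, and the step you defer at the end is not a routine check --- it is the entire mathematical content of the statement. Theorem~\ref{Thm:Point-Process-Conv} gives only the marginal convergence $Z_n(\theta)\distconv\Yi$. Since $Z_n(\theta)$ is built from the very same BRW whose first batch of displacements generates $\hat H_{\theta,(1)}^{\infty}$, nothing in that statement excludes asymptotic dependence between $Z_n(\theta)$ and $\hat H_{\theta,(1)}^{\infty}$, whereas the limit in Theorem~\ref{Thm:Point-Process-Weak-Conv} specifically asserts that $\Ni$ is independent of the BRW (hence of $\hat H_{\theta,(1)}^{\infty}$). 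So you must prove the joint (stable) convergence yourself; and once you have it, both theorems follow from it at once, so routing the argument through Theorem~\ref{Thm:Point-Process-Conv} buys you nothing. In this sense your logical order is inverted relative to the paper's: the paper proves the two statements simultaneously by adapting the argument of Theorems~2.7 and~2.8 of \cite{BaGh21}, with Lemma~\ref{Lem:Inhomo_Wn} doing the inhomogeneous-to-homogeneous reduction.

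The missing ingredient is supplied by a conditional Laplace functional computation, which is where the exponential decorations earn their keep. Conditionally on $\mathcal{F}_n=\sigma\left(S(v):|v|\leq n\right)$, each $\theta S(v)-\log E_v$ is an independent Gumbel point centered at $\theta S(v)$, so for a nonnegative continuous compactly supported (from the left) test function $\phi$ the conditional Laplace functional of $\sum_{|v|=n}\delta_{\{\theta S(v)-\log E_v-c_n\}}$ is the explicit product $\prod_{|v|=n}\left(1-\int\left(1-e^{-\phi(x)}\right)e^{-(x-\theta S(v)+c_n)}\,dx\right)$. Using $M_n(\theta)\prob 0$ (as in the proof of Lemma~\ref{Lem:Inhomo_Wn}) this product is asymptotic to $\exp\left(-W_n(\theta)e^{-c_n}\int\left(1-e^{-\phi(x)}\right)e^{-x}\,dx\right)$, i.e.\ the point process is conditionally close to a Cox process with intensity $W_n(\theta)e^{-c_n}e^{-x}\,dx$. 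Lemma~\ref{Lem:Inhomo_Wn} together with Proposition~4.2\,(ii) of \cite{BaGh21} (resp.\ Theorem~1.1 of \cite{AiSh14} when $\theta=\theta_{(1)}$, with the extra $\frac{1}{2}\log q_1(n)$ in $c_n$) gives $W_n(\theta)e^{-c_n}\prob e^{\theta\hat H_{\theta,(1)}^{\infty}}$, so the conditional Laplace functional converges in probability to $\exp\left(-e^{\theta\hat H_{\theta,(1)}^{\infty}}\int\left(1-e^{-\phi(x)}\right)e^{-x}\,dx\right)$. This is exactly the stable convergence you postulate, and it identifies the limit as a PPP of intensity $e^{-x}dx$ shifted by $\theta\hat H_{\theta,(1)}^{\infty}$ with the required independence. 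Your proposal correctly names this as the obstacle but leaves it unproved; without it the proof is incomplete.
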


Let $\Yi_{\max}$ be the right-most position of the point process $\Yi$, and  $\overline{\Yi}$ be the point process $\Yi$ viewed from its right-most position, 
i.e.,
\[ \overline{\Yi} = \sum_{j\geq1}\delta_{-\log \zeta_j-\Yi_{\max}}. \]
Then as a corollary of the above theorem, we get the following result, which confirms the validity of the \emph{Brunet-Derrida Conjecture} for LPMTI-BRW for any $ \theta < \min_i \theta_{(i)} \leq \infty$.
\begin{theorem}
	\label{Thm:BD-Conj}
	Suppose $q_i(n) \longrightarrow \infty$ for all $1 \leq i \leq k$, then
	for any $ \theta < \min_i \theta_{(i)} \leq \infty$ and also for 
	$\theta= \theta_{(1)}< \min_{i\neq 1} \theta_{(i)} \leq \infty$,
	\[\sum_{|v|=n}\delta_{\left\{\theta S(v)-\log E_v-\theta R_n^*(\theta) \right\}}\distconv \overline{\Yi}. \]
\end{theorem}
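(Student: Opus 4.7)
The plan is to deduce this from Theorem~\ref{Thm:Point-Process-Conv} combined with Theorems~\ref{Thm:Asymptotic-2}/\ref{Thm:Asymptotic-2+1} via a continuous mapping argument on the ``shift by the maximum'' operation.

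First observe that, since $\theta>0$, the right-most atom of the point process $Z_n(\theta)$ is exactly
$$M_n \;:=\; \theta R_n^*(\theta) - \sum_{i=1}^k q_i(n)\nu_i(\theta) - \theta\, \hat{H}_{\theta,(1)}^{\infty}$$
(with the analogous correction $+\tfrac12\log q_1(n)$ in the boundary case $\theta=\theta_{(1)}$), and a direct substitution shows
$$\sum_{|v|=n}\delta_{\theta S(v)-\log E_v-\theta R_n^*(\theta)} \;=\; Z_n(\theta)\bigl(\,\cdot + M_n\,\bigr),$$
which is precisely $Z_n(\theta)$ re-centered at its right-most atom; symmetrically $\overline{\Yi}=\Yi(\,\cdot+\Yi_{\max}\,)$ by definition.

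The heart of the proof is to upgrade the marginal convergences $Z_n(\theta)\distconv \Yi$ (Theorem~\ref{Thm:Point-Process-Conv}) and $M_n\distconv \Yi_{\max}$ (from Theorem~\ref{Thm:Asymptotic-2} or~\ref{Thm:Asymptotic-2+1}, since $M_n$ is an affine function of the centered right-most position) to the joint convergence $\bigl(Z_n(\theta),M_n\bigr)\distconv(\Yi,\Yi_{\max})$. Joint tightness is automatic from marginal tightness. For any subsequential weak limit $(Z^*,M^*)$, Skorokhod representation on a common probability space yields versions $(\tilde Z_n,\tilde M_n)\to(\tilde Z^*,\tilde M^*)$ almost surely (the first coordinate in the vague topology), with $\tilde M_n=\max\tilde Z_n$ for every $n$ since this identity holds at the level of joint laws. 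The a.s. convergence $\tilde M_n\to\tilde M^*$ prevents atoms of $\tilde Z_n$ from escaping to $+\infty$, and then two applications of the Portmanteau theorem (to the vaguely closed set $\{\mu:\mu(\tilde M^*+\epsilon,\infty)=0\}$ and the vaguely open set $\{\mu:\mu(\tilde M^*-\epsilon,\tilde M^*+\epsilon)\ge 1\}$, letting $\epsilon\downarrow 0$) force $\max\tilde Z^*=\tilde M^*$ almost surely. The a.s. uniqueness of the argmax of $\Yi$ (atoms of a rate-one Poisson point process are a.s. distinct) then pins down the joint law as $(\Yi,\Yi_{\max})$.

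Finally, the continuous mapping theorem applied to the shift map $(\mu,a)\mapsto\mu(\,\cdot+a\,)$, which is continuous in the product of the vague and Euclidean topologies, delivers
$$\sum_{|v|=n}\delta_{\theta S(v)-\log E_v-\theta R_n^*(\theta)}\;=\;Z_n(\theta)\bigl(\,\cdot + M_n\,\bigr)\;\distconv\;\Yi\bigl(\,\cdot + \Yi_{\max}\,\bigr)\;=\;\overline{\Yi},$$
as required. The main technical hurdle is precisely the joint upgrade: because $\mu\mapsto\max\mu$ is not vaguely continuous (atoms can escape to $+\infty$), the continuous mapping theorem cannot be invoked on $Z_n(\theta)$ alone, and the independent distributional control of $M_n$ furnished by Theorem~\ref{Thm:Asymptotic-2} is essential to rule out such loss of mass at infinity and close the argument.
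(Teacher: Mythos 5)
Your argument is correct and is essentially the route the paper intends: Theorem~\ref{Thm:BD-Conj} is presented there as an immediate corollary of Theorem~\ref{Thm:Point-Process-Conv} with no details supplied, and the joint-convergence-plus-shift argument you spell out (upgrading $(Z_n(\theta),M_n)$ to joint convergence, identifying $M^*=\max Z^*$ via Portmanteau, then applying the continuous shift map $(\mu,a)\mapsto\mu(\cdot+a)$) is the standard way to make that corollary rigorous. One small point to watch: for the identification of the subsequential limit to close you need $M_n\distconv -\log E\disteq \Yi_{\max}$, which follows from the coupling $\theta R_n^*(\theta)\disteq \log W_n(\theta)-\log E$ together with Lemma~\ref{Lem:Inhomo_Wn}; Theorem~\ref{Thm:Asymptotic-2} as literally printed would instead give $M_n\distconv-\theta\log E$, so you are implicitly (and correctly) using the $\theta$-rescaled form of that statement.
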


\section{Proofs of the Main Results}
\label{Sec:Proofs}

\subsection{Proof of Theorems~\ref{Thm:Asymptotic-1},~\ref{Thm:Asymptotic-1+1},~\ref{Thm:Asymptotic-2} and~\ref{Thm:Asymptotic-2+1}}
To prove these theorems, we need the following technical result. We define the linear statistics
\begin{equation}
	W_n(\theta)\equiv W_n(\theta)(q_1(n),\ldots,q_k(n),Z_1,\ldots,Z_k)  :=\sum_{|v|=n}e^{\theta S(v)}.
\end{equation}

Then we have
\begin{lemma}
	\label{Lem:Inhomo_Wn}
	For any $ \theta < \min_i \theta_{(i)} \leq \infty$ and also for $\theta= \theta_{(1)}< \min_{i\neq 1} \theta_{(i)} \leq \infty$,
	\begin{equation*}
			\frac{W_n(\theta)(q_1(n),\ldots,q_k(n),Z_1,\ldots,Z_k)\cdot e^{-\sum_{i=1}^k q_i(n)\nu_i\left(\theta\right)}}
			{W_{q_1(n)}(\theta)(q_1(n),Z_1)\cdot e^{- q_1(n)\nu_1\left(\theta\right)}}\prob 1
	\end{equation*}
\end{lemma}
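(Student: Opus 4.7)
The plan is to decompose the branching random walk at generation $q_1(n)$ and to reduce the convergence to an $L^{1+p}$ estimate of a weighted sum of conditionally i.i.d.\ mean-one random variables. Grouping particles $u$ at generation $n$ according to their ancestor $v$ at generation $q_1(n)$ gives
\begin{equation*}
\frac{W_n(\theta)\,e^{-\sum_{i=1}^{k}q_i(n)\nu_i(\theta)}}{W_{q_1(n)}(\theta)\,e^{-q_1(n)\nu_1(\theta)}}
\;=\;\sum_{|v|=q_1(n)} w_v\, M_v,
\end{equation*}
where $w_v := e^{\theta S(v)-q_1(n)\nu_1(\theta)}/\bigl(W_{q_1(n)}(\theta)\,e^{-q_1(n)\nu_1(\theta)}\bigr)$ are positive weights summing to $1$, and $M_v$ is the normalized additive partition function at generation $n$ for the inhomogeneous subtree rooted at $v$, driven by $Z_2,\ldots,Z_k$ over $q_2(n),\ldots,q_k(n)$ generations. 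Letting $\mathcal{F}_{q_1(n)}:=\sigma(S(v):|v|\le q_1(n))$, the random variables $\{M_v\}$ are i.i.d.\ and independent of $\mathcal{F}_{q_1(n)}$, and each has mean $1$ by iterating the additive martingale identity across blocks $2,\ldots,k$. Denote the ratio on the left of the display by $R_n$.

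Next, using Assumption {\bf (A1)} and the fact that $a\mapsto\nu_i(a)/a$ is strictly decreasing on $(0,\theta_{(i)})$ (a consequence of strict convexity and the definition of $\theta_{(i)}$), pick $p\in(0,1]$ small enough that $(1+p)\theta<\min_{i\ge 2}\theta_{(i)}$; such a $p$ exists in both cases of the lemma, since $\theta\le\theta_{(1)}<\min_{i\ne 1}\theta_{(i)}$ strictly. Shrinking $p$ further if needed, Assumption {\bf (A3)} ensures $\E\bigl[N_i^{1+p}\bigr]<\infty$ for all $i$. The Biggins $L^{1+p}$-condition $\nu_i((1+p)\theta)<(1+p)\nu_i(\theta)$ then holds for each $i\ge 2$, and an iteration of Biggins' $L^{1+p}$-martingale convergence theorem across blocks $2,\ldots,k$ gives a uniform bound $\sup_n \E\bigl[|M_v|^{1+p}\bigr]<\infty$, hence $\E\bigl[|M_v-1|^{1+p}\bigr]\le C_p$. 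Applying the Bahr--Esseen inequality conditionally on $\mathcal{F}_{q_1(n)}$ to the mean-zero, conditionally i.i.d.\ sum $\sum_v w_v(M_v-1)$ yields
\begin{equation*}
\E\!\left[\,|R_n-1|^{1+p}\,\Big|\,\mathcal{F}_{q_1(n)}\right]
\;\le\; 2\sum_v w_v^{\,1+p}\,\E\bigl|M_v-1\bigr|^{1+p}
\;\le\; C\,\bigl(\max_v w_v\bigr)^{p}.
\end{equation*}

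The final ingredient is the almost sure convergence $\max_{|v|=q_1(n)} w_v \to 0$. For $\theta<\theta_{(1)}$, Biggins' martingale convergence theorem gives $W_{q_1(n)}(\theta)\,e^{-q_1(n)\nu_1(\theta)}\to D^{\infty}_{\theta,(1)}>0$ a.s., while the Biggins' speed result $q_1(n)^{-1}\max_{|v|=q_1(n)} S(v)\to \nu_1'(\theta_{(1)})$ combined with the strict inequality $\theta\,\nu_1'(\theta_{(1)})<\nu_1(\theta)$ for $\theta<\theta_{(1)}$ (by strict convexity of $\nu_1$ and the tangency condition defining $\theta_{(1)}$) forces $\max_v e^{\theta S(v)-q_1(n)\nu_1(\theta)}\to 0$ at an exponential rate. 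At the boundary $\theta=\theta_{(1)}$ one replaces these by the classical logarithmic correction $\max_{|v|=q_1(n)}S(v)=q_1(n)\nu_1'(\theta_{(1)})-\tfrac{3}{2\theta_{(1)}}\log q_1(n)+O_{\mathbb{P}}(1)$ and the $q_1(n)^{-1/2}$-scaling of the critical Biggins martingale (giving the derivative martingale $D^{\infty}_{\theta_{(1)},(1)}$), yielding $\max_v w_v = O_{\mathbb{P}}(q_1(n)^{-1})\to 0$. Taking expectations in the display above and using bounded convergence (since $(\max_v w_v)^p\le 1$), we obtain $\E\bigl[|R_n-1|^{1+p}\bigr]\to 0$, hence $R_n\prob 1$, which is the claim. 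The main obstacle is maintaining the iterated $L^{1+p}$ bound through all $k-1$ subsequent blocks; this is precisely what the strict separation $\theta\le\theta_{(1)}<\min_{i\ge 2}\theta_{(i)}$ allows, via a sufficiently small choice of $p$.
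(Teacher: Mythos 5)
Your decomposition at generation $q_1(n)$ into $R_n=\sum_{|v|=q_1(n)}w_vM_v$ with weights summing to one and conditionally i.i.d.\ mean-one $M_v$ is exactly the paper's starting point, and both arguments ultimately hinge on $\max_v w_v\prob 0$. Where you diverge is the concentration step. The paper keeps everything at the $L^1$ level: it proceeds by induction on $k$, uses its induction hypothesis together with Proposition~4.2(ii) of \cite{BaGh21} to get $L^1$-convergence of the inner partition functions to $D^{\infty}_{\theta,(2)}$, and then invokes Lemma~2.1 of Biggins--Kyprianou \cite{BiKy97} (Kurtz's weak-law lemma) conditionally on $\mathcal{F}_{q_1(n)}$. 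You instead apply von Bahr--Esseen conditionally, which trades the induction and the $L^1$-convergence input for a uniform-in-$n$ bound $\sup_n\E[M_v^{1+p}]<\infty$; this is legitimately available here because the strict separation $\theta\le\theta_{(1)}<\min_{i\ge 2}\theta_{(i)}$ leaves room to choose $p$ with $\nu_i((1+p)\theta)<(1+p)\nu_i(\theta)$ for all $i\ge 2$, and your route has the advantage of being non-inductive and quantitative, yielding the rate $\E|R_n-1|^{1+p}\le C\,\E[(\max_v w_v)^p]$. Two points you should make explicit to close the argument: (i) the ``iteration of Biggins' $L^{1+p}$ theorem across blocks'' is really the standard recursion $\E[W_{t+1}^{1+p}]\le\E[W_t^{1+p}]+2\,\E\bigl|W_1^{(i)}-1\bigr|^{1+p}\,\E\bigl[\sum_{|v|=t}e^{(1+p)\theta S(v)-(1+p)(\cdot)}\bigr]$, whose error terms are geometrically summable within each block under your chosen $p$; and (ii) this recursion needs $\E\bigl[(W_1^{(i)})^{1+p}\bigr]<\infty$ for each $i\ge 2$, which is where {\bf (A1)} and {\bf (A3)} must be combined (e.g.\ via $(\sum_j a_j)^{1+p}\le N^p\sum_j a_j^{1+p}$ and a H\"older step), the same integrability the paper outsources to \cite{BaGh21}. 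Your treatment of $\max_v w_v\to 0$, including the critical case via the $-\tfrac{3}{2\theta_{(1)}}\log q_1(n)$ correction and the A\"id\'ekon--Shi $\sqrt{q_1(n)}$-scaling, matches what the paper imports from (5.5)--(5.6) of \cite{BaGh21} and is fine.
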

\begin{proof}
	Without loss of generality we can assume that $\nu_i(\theta)=0$ for all $i\in\{1,2,\ldots,k\}$. This can be made to satisfy by centering each point process $Z_i$ by $\nu_i(\theta)$. 

	We prove the lemma by induction. Note that for $k=1$,  the lemma holds trivially.  We assume the lemma holds for $k=m-1$ for some $m\in\N$.
	
	Now, take $k=m$. For each $v$ such that $|v|=q_1(n)$, we define
\begin{equation}
	\overline{W}_{n,v}(\theta) =\sum_{|u|=n, v<u }e^{\theta \left(S(u)-S(v)\right)}.
\end{equation}
Notice that 	$\big\{\overline{W}_{n,v}(\theta)\big\}_{|v|=q_1n}$ are i.i.d. and have the  same distribution as 
$$W_{n-q_1(n)}(\theta)(q_2(n),\ldots,q_m(n),Z_2,\ldots,Z_m),$$
which by our induction hypothesis and Proposition~4.2\,(ii) of Bandyopadhyay and Ghosh~\cite{BaGh21}  converges in probability to $D_{\theta,(2)}^{\infty}$. Since both of them has mean $1$, we also have
\begin{equation}
	W_{n-q_1(n)}(\theta)(q_2(n),\ldots,q_m(n),Z_2,\ldots,Z_m)\xrightarrow{\,L_1\,} D_{\theta,(2)}^{\infty}.
	\label{Equ:L1_conv}
\end{equation}
 Now, observe that
\begin{align}
	&\frac{W_n(\theta)(q_1(n),\ldots,q_m(n),Z_1,\ldots,Z_m)}{W_{q_1(n)}(\theta)(q_1(n),Z_1)} -1\nonumber\\[.25cm]
	=\,& \sum_{|v|=q_1(n)} \frac{e^{\theta S(v)}}{\sum_{|u|=q_1(n)} e^{\theta S(u)}}\left( \overline{W}_{n,v}(\theta)  -1\right).
\end{align}
Now, from~(5.5) and~(5.6) of Bandyopadhyay and Ghosh~\cite{BaGh21}, we know that 
\begin{equation*}
	M_n(\theta):= \max_{|v|=q_1(n)} \frac{e^{\theta S(v)}}{\sum_{|u|=q_1(n)} e^{\theta S(u)}} \prob 0.
\end{equation*}
Let $\mathcal{F}_n$ be the $\sigma$-field generated by $\{S(v):|v|\leq q_1(n)\}$. Then using Lemma~2.1 of Biggins and Kyprianou (1997) \cite{BiKy97}, which is a particular case of Lemma~2.2 in Kurtz (1972) \cite{Kurt72}, we get that for every $0<\varepsilon<1/2$,
\begin{align}
	&\P\left( \left.\left| \frac{W_n(\theta)(q_1(n),\ldots,q_m(n),Z_1,\ldots,Z_m)}{W_{q_1(n)}(\theta)(q_1(n),Z_1)} -1 \right|>\varepsilon\right| \mathcal{F}_n \right)\nonumber\\
	\leq\,\,& \frac{2}{\varepsilon^2}
	\left(
	\int_{0}^{\frac{1}{M_n(\theta)}} M_n(\theta)t\cdot\P\left(  \left|W_{n-q_1(n)}(\theta)(q_2(n),\ldots,q_m(n),Z_2,\ldots,Z_m)- 1\right|>t  \right)\,dt\right.\nonumber\\
	&\qquad+\left. 
	\int_{\frac{1}{M_n(\theta)}}^{\infty} \P\left(  \left|W_{n-q_1(n)}(\theta)(q_2(n),\ldots,q_m(n),Z_2,\ldots,Z_m)- 1 \right|>t \right)\,dt
	\right)\nonumber\\
	 \leq\,\,& \frac{2}{\varepsilon^2}
	 \left(
	 \int_{0}^{\infty} \P\left(  \left|W_{n-q_1(n)}(\theta)(q_2(n),\ldots,q_m(n),Z_2,\ldots,Z_m)- D_{\theta,(2)}^{\infty} \right|>t/2 \right)\,dt
	 \right.\nonumber\\
	 &\qquad+
	 \int_{0}^{\frac{1}{M_n(\theta)}} M_n(\theta)t\cdot\P\left(  \left|D_{\theta,(2)}^{\infty}- 1\right|>t/2  \right)\,dt\nonumber\\
	 &\qquad+\left. 
	 \int_{\frac{1}{M_n(\theta)}}^{\infty} \P\left(  \left|D_{\theta,(2)}^{\infty}- 1 \right|>t/2 \right)\,dt
	 \right) \label{Equ:Kurtz}
\end{align}
By using the dominated convergence theorem, the second and the third term on the right-hand side of~\eqref{Equ:Kurtz} converges to $0$ as $n\rightarrow\infty$, and by~\eqref{Equ:L1_conv}, the first term also tends to $0$ as $n\rightarrow\infty$. Then by taking expectation and using the dominated convergence theorem again, we get
\begin{equation*}
	\lim_{n\rightarrow\infty}\P\left( \left| \frac{W_n(\theta)(q_1(n),\ldots,q_m(n),Z_1,\ldots,Z_m)}{W_{q_1(n)}(\theta)(q_1(n),Z_1)} -1 \right|>\varepsilon\right)=0,
\end{equation*}
which implies
\begin{equation}
	\frac{W_n(\theta)(q_1(n),\ldots,q_m(n),Z_1,\ldots,Z_m)}{W_{q_1(n)}(\theta)(q_1(n),Z_1)}\prob 1.\label{Equ:ratio}
\end{equation}
So if the lemma holds for $k = m-1$, it also holds for $k = m$. Therefore, by using induction we complete the proof. 
\end{proof}	

An argument similar to that of the proof of Theorem~3.6 and also of the proof of Theorems~2.3 and~2.5 yields
\begin{equation}
	\theta R_n^*(\theta)  \disteq \log W_n(\theta) - \log E, \label{Equ:coupling-1}
\end{equation}
and also
\begin{equation}
	\theta R_n^*(\theta) - \log W_n(\theta)  \disteq \log E, \label{Equ:coupling-2}
\end{equation}
where $E\sim\mbox{Exponential}\,(1)$ and is independent of the process $\{S(v):|v|\leq n\}$. Now, Lemma~\ref{Lem:Inhomo_Wn}, equations~\eqref{Equ:coupling-1} and~\eqref{Equ:coupling-2}, together with Proposition~4.2\,(ii) of Bandyopadhyay and Ghosh~\cite{BaGh21} and Theorem~1.1 of A\"{i}d\'{e}kon and Shi \cite{AiSh14}, prove the theorems.

\subsection{Proof of Theorems~\ref{Thm:Point-Process-Conv} and~\ref{Thm:Point-Process-Weak-Conv}}
A similar argument as in the proof of Theorems~2.7 and~2.8 in Bandyopadhyay and Ghosh~\cite{BaGh21}, together with Proposition~4.2\,(ii) of Bandyopadhyay and Ghosh~\cite{BaGh21}, Theorem~1.1 of A\"{i}d\'{e}kon and Shi \cite{AiSh14} and Lemma~\ref{Lem:Inhomo_Wn}, yields Theorems~\ref{Thm:Point-Process-Conv} and~\ref{Thm:Point-Process-Weak-Conv}.

\section{A Specific Example}
\label{Sec:Example}
In this section we consider a
\emph{time inhomogeneous Gaussian displacement binary BRW}, which is a
a specific example of inhomogeneous BRW
introduced by Fang and Zeitouni \cite{FaZei12}. Here we shall consider 
the last progeny modified version of the same example.
To be precise, let 
 $Z_1=\delta_{\xi_{11}}+\delta_{\xi_{12}}$, $Z_2=\delta_{\xi_{21}}+\delta_{\xi_{22}}$, $\xi_{11}$, $\xi_{12}$ are i.i.d. $\mbox{N}\,(0,\sigma_1^2)$, $\xi_{21}$, $\xi_{22}$ are i.i.d. $\mbox{N}\,(0,\sigma_2^2)$ and $q_1(n)=q_2(n)=n/2$. 
 In this case we have
 \[
 \nu_1(t) =\log 2+\frac{\sigma_1^2t^2}{2}  \quad\text{ and }\quad   \nu_2(t) =\log 2+\frac{\sigma_2^2t^2}{2},
 \]
 and
 \[
 \theta_1=\frac{\sqrt{2\log 2}}{\sigma_1}  \quad\text{ and }\quad   \theta_2=\frac{\sqrt{2\log 2}}{\sigma_2}.
 \]
Therefore by the Theorem~\ref{Thm:Asymptotic-1+1}, we obtain that
\begin{theorem}
Assume $\sigma_1 > \sigma_2$, then
the following sequence of random variables
\[
R_n^*\bigg(\frac{\sqrt{2\log 2}}{\sigma_1}\bigg)-n\left(\sigma_1\sqrt{\frac{\log 2}{2}} + \frac{\sqrt{2\log 2}}{4\sigma_1}\left(\sigma_1^2+\sigma_2^2\right) \right)+\log n \left(\frac{\sigma_1}{2\sqrt{2\log 2}}\right)
\]
converges in distribution to a non-trivial distribution which depends only on 
$\sigma_1$. 
\end{theorem}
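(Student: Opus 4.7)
The plan is to derive this as a direct corollary of Theorem~\ref{Thm:Asymptotic-1+1} applied with $k = 2$, $q_1(n) = q_2(n) = n/2$, and the binary Gaussian point processes $Z_1 = \delta_{\xi_{11}} + \delta_{\xi_{12}}$, $Z_2 = \delta_{\xi_{21}} + \delta_{\xi_{22}}$ of the example. All the content is in verifying the hypotheses and expanding the centering.

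First I would check the standing assumptions. Assumption \textbf{(A1)} is immediate because $\nu_i(a) = \log 2 + \sigma_i^2 a^2 / 2 < \infty$ for every $a \in \R$. Assumption \textbf{(A2)} holds because $N_i = 2$ a.s. and two independent continuous Gaussians are almost surely distinct, while \textbf{(A3)} is trivial since $N_i$ is bounded. Solving $\nu_i(a)/a = \nu_i'(a)$ with $\nu_i$ quadratic yields
\[
\theta_{(i)} = \frac{\sqrt{2\log 2}}{\sigma_i}, \qquad i = 1, 2.
\]
Since $\sigma_1 > \sigma_2$, we have $\theta_{(1)} < \theta_{(2)}$, and $q_1(n) = q_2(n) = n/2 \to \infty$, so the hypothesis $\theta_{(1)} < \min_{i \neq 1} \theta_{(i)}$ of Theorem~\ref{Thm:Asymptotic-1+1} is satisfied.

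Next I would evaluate the centering at $\theta = \theta_{(1)}$. Direct substitution gives $\nu_1(\theta_{(1)}) = 2\log 2$ and $\nu_2(\theta_{(1)}) = \log 2 \, (1 + \sigma_2^2/\sigma_1^2)$, so after a short algebraic rearrangement
\[
\sum_{i=1}^{2} \frac{q_i(n)\, \nu_i(\theta_{(1)})}{\theta_{(1)}} = \frac{n\sqrt{2\log 2}\,(3\sigma_1^2 + \sigma_2^2)}{4\sigma_1} = n\left(\sigma_1\sqrt{\frac{\log 2}{2}} + \frac{\sqrt{2\log 2}}{4\sigma_1}\bigl(\sigma_1^2 + \sigma_2^2\bigr)\right),
\]
which is exactly the linear term in the statement. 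The logarithmic coefficient $\frac{1}{2\theta_{(1)}} = \frac{\sigma_1}{2\sqrt{2\log 2}}$ matches the coefficient of $\log n$, with the residual discrepancy $\frac{1}{2\theta_{(1)}}\log(n/2) - \frac{1}{2\theta_{(1)}}\log n = -\frac{\log 2}{2\theta_{(1)}}$ being a deterministic $O(1)$ shift absorbed into the limit. Theorem~\ref{Thm:Asymptotic-1+1} then yields the claimed convergence in distribution, with limit $H^{\infty}_{\theta_{(1)}, (1)} - \frac{\log 2}{2\theta_{(1)}}$, which depends only on the law of $Z_1$ and hence only on $\sigma_1$.

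There is no substantive obstacle beyond this bookkeeping; the only non-computational point is that the limiting law is genuinely non-degenerate. This however is built into the conclusion of Theorem~\ref{Thm:Asymptotic-1+1}, as the derivative martingale limit~\eqref{Equ:Def-D-theta1-infty} is almost surely strictly positive under \textbf{(A1)}--\textbf{(A3)} by Proposition~4.2(ii) of \cite{BaGh21} together with Theorem~1.1 of A\"{i}d\'{e}kon and Shi \cite{AiSh14}.
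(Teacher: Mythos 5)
Your proposal is correct and follows exactly the route the paper takes: the theorem is a direct instantiation of Theorem~\ref{Thm:Asymptotic-1+1} with $k=2$, $q_1(n)=q_2(n)=n/2$, and the quadratic $\nu_i$'s, and your algebra for $\nu_1(\theta_{(1)})=2\log 2$, $\nu_2(\theta_{(1)})=\log 2\,(1+\sigma_2^2/\sigma_1^2)$, the linear centering, and the harmless $O(1)$ shift from $\log(n/2)$ versus $\log n$ all checks out. You in fact supply more detail (verification of \textbf{(A1)}--\textbf{(A3)} and the non-degeneracy of the limit) than the paper, which simply cites Theorem~\ref{Thm:Asymptotic-1+1} after computing $\nu_i$ and $\theta_{(i)}$.
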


As comparison we note that in Fang and Zeitouni \cite{FaZei12}, it is shown that
for this example when $\sigma_1 > \sigma_2$, the following sequence of 
random variables
\[
R_n- n\left(\left(\sigma_1+\sigma_2\right)\sqrt{\frac{\log 2}{2}}  \right)
+\log n \left(\frac{3\left(\sigma_1+\sigma_2\right)}{2\sqrt{2\log 2}}\right)
\]
is tight.

Thus for our model we have been able to establish more than Fang and Zeitouni \cite{FaZei12} as we obtain a weak limit for the right-most position of the 
LMPTI-BRW after an appropriate centering. However, we only have this for the 
case when $\sigma_1 > \sigma_2$. In Fang and Zeitouni \cite{FaZei12} the 
other case when $\sigma_1 < \sigma_2$ has also been worked out and
tightness of the right-most position has been proved with an appropriate centering.

\bibliographystyle{plain}	
\bibliography{LPM-TI-BRW}

\end{document}